\documentclass[a4paper,12pt]{amsart}
\usepackage{amssymb}
\usepackage{cite}
\usepackage{ifthen}
\usepackage[dvips]{graphicx}
\usepackage{tabularx}

\nonstopmode \numberwithin{equation}{section}
\usepackage{amssymb}
\usepackage{ifthen}
\usepackage{graphicx}
\usepackage{amsmath}
\usepackage[T1]{fontenc} 
\usepackage[utf8]{inputenc}
\usepackage[usenames,dvipsnames]{color}
\usepackage{color}
\usepackage[english]{babel}
\usepackage{fancyhdr}
\usepackage{fancybox}
\usepackage{tikz}

\theoremstyle{plain}
\newtheorem{prop}{Proposition}

\newtheorem{conj}{Conjecture}

\theoremstyle{definition}

\newtheorem{cor}{Corollary}[section]
\newtheorem{thm}{Theorem}[section]

\newtheorem{prob}{Problem}[section]

\newtheorem{lem}{Lemma}[section]
\newtheorem{rem}{Remark}[section]

\theoremstyle{plain}
\newtheorem*{thmA}{Theorem A}

\newtheorem*{lemA}{Lemma A}
\newtheorem*{lemB}{Lemma B}

\newcounter{minutes}
\divide\time by 60
\newcounter{hours}
\multiply\time by 60
\addtocounter{minutes}{-\time}

\newcounter {own}
\def\theown {\thesection       .\arabic{own}}

\newenvironment{pf}[1][]{%
	\vskip 3mm
	\noindent
	\ifthenelse{\equal{#1}{}}%
	{{\slshape Proof. }}%
	{{\slshape #1.} }%
}%
{\qed\bigskip}

\newcounter{alphabet}

\def\be{\begin{equation}}
	\def\ee{\end{equation}}

\newcommand{\bee}{\begin{enumerate}}
	\newcommand{\eee}{\end{enumerate}}

\newcommand{\blem}{\begin{lem}}
	\newcommand{\elem}{\end{lem}}
\newcommand{\bthm}{\begin{thm}}
	\newcommand{\ethm}{\end{thm}}
\newcommand{\bcor}{\begin{cor}}
	\newcommand{\ecor}{\end{cor}}
\newcommand{\beg}{\begin{examp}}
	\newcommand{\eeg}{\end{examp}}
\newcommand{\begs}{\begin{examples}}
	\newcommand{\eegs}{\end{examples}}

\newcommand{\bdefn}{\begin{defn}}
	\newcommand{\edefn}{\end{defn}}

\newcommand{\bprob}{\begin{prob}}
	\newcommand{\eprob}{\end{prob}}
\newcommand{\bei}{\begin{itemize}}
	\newcommand{\eei}{\end{itemize}}

\newcommand{\bcon}{\begin{conj}}
	\newcommand{\econ}{\end{conj}}
\newcommand{\bcons}{\begin{conjs}}
	\newcommand{\econs}{\end{conjs}}
\newcommand{\bprop}{\begin{prop}}
	\newcommand{\eprop}{\end{prop}}
\newcommand{\br}{\begin{rem}}
	\newcommand{\er}{\end{rem}}
\newcommand{\brs}{\begin{rems}}
	\newcommand{\ers}{\end{rems}}
\newcommand{\bo}{\begin{obser}}
	\newcommand{\eo}{\end{obser}}
\newcommand{\bos}{\begin{obsers}}
	\newcommand{\eos}{\end{obsers}}
\newcommand{\bpf}{\begin{pf}}
	\newcommand{\epf}{\end{pf}}
\newcommand{\ba}{\begin{array}}
	\newcommand{\ea}{\end{array}}
\newcommand{\beq}{\begin{eqnarray}}
	\newcommand{\beqq}{\begin{eqnarray*}}
		\newcommand{\eeq}{\end{eqnarray}}
	\newcommand{\eeqq}{\end{eqnarray*}}

\begin{document}

\title{Sharp Bohr and Bohr-Rogosinski inequalities involving area measure for close-to-convex harmonic mappings}

\author{Molla Basir Ahamed$^*$}
\address{Molla Basir Ahamed, Department of Mathematics, Jadavpur University, Kolkata-700032, West Bengal, India.}
\email{mbahamed.math@jadavpuruniversity.in}

\author{Partha Pratim Roy}
\address{Partha Pratim Roy, Department of Mathematics, Jadavpur University, Kolkata-700032, West Bengal, India.}
\email{parthacob2023@gmail.com}

\subjclass[{AMS} Subject Classification:]{Primary 30C45, 30C50, 30C80, Secondary 31A05, 30C35}
\keywords{Harmonic mappings,  Univalent functions,  Bohr inequality,  Bohr radius,  Majorant series,  Bohr-Rogosinski phenomenon,  Area measure (or Planar integral),  Coefficient estimates}

\def\thefootnote{}
\footnotetext{ {\tiny File:~\jobname.tex,
printed: \number\year-\number\month-\number\day,
          \thehours.\ifnum\theminutes<10{0}\fi\theminutes }
}\makeatletter\def\thefootnote{\@arabic\c@footnote}\makeatother
\begin{abstract}
	In this article, we investigate refined and generalized versions of the Bohr and Bohr--Rogosinski inequalities for a normalized subclass $\mathcal{P}_{\mathcal{H}}^{0}(\alpha)$ ($0 \le \alpha < 1$) of univalent close-to-convex harmonic mappings $f = h + \overline{g}$ defined on the open unit disk $\mathbb{D} \subset \mathbb{C}$. By incorporating non-negative monotone increasing functions associated with the planar area integral $S_r/\pi$ of the image domain $f(\mathbb{D}_r)$, we establish new sharp Bohr-type inequalities expressed in terms of the Euclidean distance $d(f(0), \partial f(\mathbb{D}))$. Furthermore, we establish sharp Bohr--Rogosinski-type inequalities involving powers of the modulus of the mapping $|f(z)|^p$ ($p \ge 1$). All associated radii are proven to be sharp, and extremal functions realizing the equality cases are explicitly identified. As applications, our results generalize and unify several well-known classical and recent theorems in geometric function theory.
\end{abstract}
\maketitle
\pagestyle{myheadings}
\markboth{M. B. Ahamed and P. P. Roy}{Sharp Bohr and Bohr-Rogosinski inequalities involving area measure }
\tableofcontents
\section{\bf Introduction}
Let $\mathbb{D} := \{z \in \mathbb{C} : |z| < 1\}$ be the open unit disk in the complex plane $\mathbb{C}$. Let $\mathcal{B}(\mathbb{D})$ denote the class of analytic functions $f$ in the unit disk $\mathbb{D}$ such that $|f(z)| \le 1$ in $\mathbb{D}$. One of the most celebrated classical results for the class $\mathcal{B}(\mathbb{D})$ was given by H. Bohr.

\noindent\textbf{Theorem 1.1.} \cite{Bohr-PLMS-1914} \textit{If $f(z) = \sum_{n=0}^{\infty} a_n z^n \in \mathcal{B}(\mathbb{D})$, then}
\begin{align}\label{eq-1.1}
	M_f(r) := \sum_{n=0}^{\infty} |a_n| r^n \le 1 \quad \text{for} \quad |z| = r \le \frac{1}{3}.
\end{align}
\textit{The constant $1/3$ is best possible.}

\medskip
Interest in the Bohr inequality was revived when Dixon (see \cite{Dixon-BLMS-1995}) used it to settle in the negative the conjecture that a non-unital Banach algebra that satisfies the von Neumann inequality must be isometrically isomorphic to a closed subalgebra of $B(\mathcal{H})$ for some Hilbert space $\mathcal{H}$. Subsequently, Paulsen and Singh \cite{Paulsen-Singh-PAMS-2004} have extended the Bohr inequality in the context of Banach algebra. An open problem on the Bohr inequality has been raised in \cite{Djakov-Ramanujan-JA-2000} has been settled in \cite{Kayumov-Ponnusamy-AASF-2019}. Bohr radius has also been investigated by many researchers in various multidimensional spaces (see e.g., \cite{Ahammed-Ahamed-CVEE-2024, Aizenberg-PAMS-2000, Boas-Khavinson-PAMS-1997, Galicer-Mansilla-Muro-TAMS-2020, Kumar-PAMS-2023, Liu-Ponnusamy-PAMS-2021}) and references therein.

Similar to Bohr inequalities, there is another concept known as the Bohr--Rogosinski inequalities, which were established by Kayumov \textit{et al.} \cite{Kayumov-Khammatova-Ponnusamy-JMAA-2021} for the class $\mathcal{B}(\mathbb{D})$. For analytic functions $f$ defined on the unit disk $\mathbb{D}$, $S_r := S_r(f)$ denotes the planar integral
\[
S_r = \int_{\mathbb{D}_r} |f'(z)|^2 dA(z), \quad \text{where } \mathbb{D}_r := \mathbb{D}(0;r) \text{ and } 0 < r < 1.
\]
If $f(z) = \sum_{n=0}^{\infty} a_n z^n$, then the quantity $S_r$ has the series representation $S_r = \pi \sum_{n=1}^{\infty} n|a_n|^2 r^{2n}$, and the quantity $S_r$ plays a significant role in the study of improved Bohr inequalities. For example, Kayumov and Ponnusamy (see \cite{Kayumov-Ponnusamy-CRAS-2018}) obtained the result improving the classical Bohr inequality for the class $\mathcal{B}(\mathbb{D})$. Later, Ismagilov \textit{et al.} (see \cite{Ismagilov-Kayumov-Ponnusamy-JMAA-2020}) established another improved version of the classical Bohr inequality, and also derived a sharp inequality as an improvement of the Bohr--Rogosinski inequality for the class $\mathcal{B}(\mathbb{D})$.

\subsection{\bf Close-to-convex harmonic mappings} For a continuously differentiable complex-valued mapping 
$f(z) = u(z) + iv(z)$, $z = x + iy$, 
we use the common notions for its formal derivatives:

\begin{align*}
	f_z = \frac{1}{2}(f_x - i f_y) 
	\quad \text{and} \quad 
	f_{\bar{z}} = \frac{1}{2}(f_x + i f_y).
\end{align*} 
In complex analysis, a complex-valued function $f = u + iv$ defined on a domain $D \subset \mathbb{C}$ (where $u$ and $v$ are real-valued functions) is called a harmonic mapping if both its real part $u$ and imaginary part $v$ are real harmonic functions in $D$.This means they satisfy the classical Laplace equation
\begin{align*}
	\Delta u = \frac{\partial^2 u}{\partial x^2} + \frac{\partial^2 u}{\partial y^2} = 0 \quad \text{and} \quad \Delta v = \frac{\partial^2 v}{\partial x^2} + \frac{\partial^2 v}{\partial y^2} = 0.
\end{align*}
Therefore, the Laplace equation $\Delta f = 0$ for a complex-valued function $f$ takes the elegant and simple form 
\begin{align*}
	\frac{\partial^2 f}{\partial z \partial \bar{z}} = 0 \quad \text{or} \quad f_{z\bar{z}} = 0.
\end{align*}

While harmonic mappings are deep, elegant structures in geometric function theory, they are not just purely abstract tools. Because they satisfy the Laplace equation while allowing a non-analytic (co-analytic) component, they provide a much more flexible framework than classical conformal (analytic) mappings for solving physical and physical-geometric problems. In fact, harmonic mappings play a crucial role due to their wide range of applications across various branches of science and engineering. In particular, techniques involving harmonic mappings have been effectively employed in the study and analysis of fluid flow problems (see \cite{Aleman-2012,Constantin-2017}). For instance, Aleman and Constantin~\cite{Aleman-2012} established a fundamental link between harmonic mappings and ideal fluid flows. Moreover, they introduced an elegant method to express and solve the incompressible two-dimensional Euler equations in terms of univalent harmonic mappings (refer to \cite{Constantin-2017} for further details).
\vspace{1.2mm}
Let $\mathcal{H}(\Omega)$ denote the class of complex-valued harmonic functions defined on a domain $\Omega$.  
It is well-known that every function $f \in \mathcal{H}(\Omega)$ admits the canonical decomposition  $f = h + \overline{g},$
where both $h$ and $g$ are analytic in $\Omega$.  
According to the classical result of Lewy~\cite{Lew-BAMS-1936}, a harmonic mapping $f = h + \overline{g}$ is locally univalent in $\Omega$ if, and only if, the Jacobian determinant  
\begin{align}\label{Eqn-1.1}
	J_f(z) := |f_z(z)|^2 - |f_{\bar{z}}(z)|^2 = |h'(z)|^2 - |g'(z)|^2
\end{align}
is non-vanishing throughout $\Omega$.  
Furthermore, a locally univalent harmonic mapping is said to be \textit{sense-preserving} if $J_f(z)>0$ .\vspace{2mm}

A complex-valued harmonic mapping $f$ defined on $\mathbb{D}$ is called a close-to-convex if 
\begin{enumerate}
	\item[(i)] $f(z)$ is univalent (one-to-one) in $\mathbb{D}$,
	\item[(II)] $f(z)$ is sense-preserving, and
	\item[(iii)] The image domain $f(\mathbb{D})$ is a close-to-convex domain.
\end{enumerate}
Geometrically, a domain $\Omega \subset \mathbb{C}$ is \textit{close-to-convex} if its complement $\mathbb{C} \setminus \Omega$ can be partitioned into a family of non-intersecting, straight half-lines (rays) that stretch out to infinity.
\begin{table}[htbp]
	\centering
	\caption{Comparison between analytic and harmonic close-to-convex functions}
	\label{tab:close_to_convex}
	\begin{tabularx}{\textwidth}{|l|X|X|}
		\hline
		\textbf{Property} & \textbf{Analytic functions ($f$)} & \textbf{Harmonic mappings ($f = h + \overline{g}$)} \\ \hline
		\textbf{Structure} & Purely analytic. & Split into analytic ($h$) and co-analytic ($g$) parts. \\ \hline
		\textbf{Condition} & $\operatorname{Re}\left(\frac{f'(z)}{\phi'(z)}\right) > 0$ (for convex $\phi$) & $\operatorname{Re}\left(\frac{h'(z) - \epsilon g'(z)}{\phi'(z)}\right) > 0$ (for $|\epsilon|=1$) \\ \hline
	\end{tabularx}
\end{table}
\subsection{\bf Harmonic mappings and Bohr inequality on $\mathbb{D}$}
A function $f \in \mathcal{C}^2$ defined on a domain $\Omega \subseteq \mathbb{C}$ is termed harmonic whenever it satisfies the Laplace equation $\Delta f = 4f_{z\overline{z}} = 0$. Let $\operatorname{Har}(\Omega)$ represent the family of orientation-preserving harmonic mappings in a simply connected domain $\Omega$, and let $\mathcal{H}$ denote the subclass consisting of such mappings. It is an established property that every $f \in \operatorname{Har}(\mathbb{D})$ admits a canonical representation $f = h + \overline{g}$, where $h$ and $g$ are analytic functions in the unit disk $\mathbb{D}$ normalized by $h(0) = 0 = h'(0) - 1$ and $g(0) = 0$. In particular, we define the subclass $\mathcal{H}_0 \subset \mathcal{H}$ by prescribing the additional normalization $g'(0) = 0$. Consequently, every mapping $f = h + \overline{g} \in \mathcal{H}_0$ possesses the series expansion
\begin{align}\label{eq-1.3}
	h(z) = z + \sum_{n=2}^{\infty} a_n z^n \quad \text{and} \quad g(z) = \sum_{n=1}^{\infty} b_n z^n.
\end{align}
Furthermore, the classical Bohr inequality can be recast geometrically in the equivalent formulation
\begin{align}\label{eq-1.4}
	\sum_{n=1}^{\infty} |a_n| r^n \le 1 - |f(0)| = d\big(f(0), \partial\mathbb{D}\big) \quad \text{for } |z| = r \le \frac{1}{3},
\end{align}
where $d\big(f(0), \partial\mathbb{D}\big)$ signifies the standard Euclidean distance from $f(0)$ to the boundary of the unit disk $\mathbb{D}$.\vspace{1.2mm}

The Bohr inequality for a class of harmonic mappings was initiated first in \cite{Abu-Muhanna-2010} in terms of the Euclidean distance $d$ and was investigated in \cite{Kayumov-Ponnusamy-Shakirov-MN-2018} and subsequently by a number of authors (see e.g., \cite{Ahamed-CMFT-2022, Ahamed-Ahammed-MJM-2024, Ahamed-Allu-BMMSS-2022, Ahamed-Allu-Halder-AMP-2021, Ahamed-Allu-Halder-CVEE-2023, Alkhaleefah-Kayumov-Ponnusamy-PAMS-2019, Allu-Halder-BSM-2021, Evdoridis-Ponnusamy-Rasila-IM-2019, Huang-Liu-Ponnusamy-MJM-2021, Kayumov-Ponnusamy-JMAA-2018, Liu-Ponnusamy-BMMSS-2019}).\vspace{1.2mm}

 \subsection{\bf  Bohr-Rogosinski inequality for the class $\mathcal{P}_{\mathcal{H}}^{0}(\alpha)$ } In this paper, our primary objective is to study improved versions of the classical Bohr inequality for certain class of harmonic mappings. In $2013$, Ponnusamy \textit{et al.}~\cite{Ponnusamy-CVEE-2013} introduced the following subclasses:
\begin{align*}
	\mathcal{P}_{\mathcal{H}} = 
	\{ f = h + \overline{g} \in \mathcal{H} : 
	\operatorname{Re} h'(z) > |g'(z)|, \ z \in \mathbb{D} \},
\end{align*}
and $\mathcal{P}_{\mathcal{H}}^{0} = \mathcal{P}_{\mathcal{H}} \cap \mathcal{H}_0$.  
Motivated by these classes, Li and Ponnusamy~\cite{Li-Ponnusamy-2016} further investigated related subclasses of harmonic mappings.
Let  
\begin{align*}
	\mathcal{P}_{\mathcal{H}}(\alpha) = 
	\{ f = h + \overline{g} \in \mathcal{H} : 
	\operatorname{Re}(h'(z) - \alpha) > |g'(z)|, \ z \in \mathbb{D} \},
\end{align*}
and define $\mathcal{P}_{\mathcal{H}}^{0}(\alpha) = \mathcal{P}_{\mathcal{H}}(\alpha) \cap \mathcal{H}_0$, where 
\begin{align}\label{Eqn-3.2}
	\mathcal{P}_{\mathcal{H}}^{0}(\alpha) = 
	\left\{ f = h + \overline{g} \in \mathcal{H} : 
	\operatorname{Re}\,(h'(z) - \alpha) > |g'(z)|, \ 
	0 \le \alpha < 1, \ g'(0) = 0 \ \text{for} \ z \in \mathbb{D} \right\}.
\end{align}
It is clear that $\mathcal{P}_{\mathcal{H}}(\alpha) \subseteq \mathcal{P}_{\mathcal{H}}$ and  
$\mathcal{P}_{\mathcal{H}}^{0}(\alpha) \subseteq \mathcal{P}_{\mathcal{H}}^{0}$ for $0 \leq \alpha < 1$. Li and Ponnusamy~\cite{Li-Ponnusamy-NA-2013} proved that functions in  
$\mathcal{P}_{\mathcal{H}}^{0}(\alpha)$ are univalent for $0 \leq \alpha < 1$. They also established coefficient estimates and investigated the univalence of the partial sums for this class.\vspace{1.2mm}

Motivated by the class $\mathcal{P}_{\mathcal{H}}^{0}$ \cite{Ponnusamy-CVEE-2013}, Li and Ponnusamy \cite{Li-Ponnusamy-NA-2013} studied the growth estimates and sharp coefficient bounds of functions $f$ in the class $\mathcal{P}_{\mathcal{H}}^{0}(\alpha)$, which is defined by \eqref{Eqn-3.2}. The Bohr phenomenon for the class $\mathcal{P}_{\mathcal{H}}^{0}(\alpha)$ was recently studied in the paper \cite{Ahamed-Allu-Halder-AMP-2021}. The key ingredients of our investigation into improved Bohr inequalities for functions in $\mathcal{P}_{\mathcal{H}}^{0}(\alpha)$ are the following coefficient bounds and growth estimates, which were proved by Li and Ponnusamy \cite{Li-Ponnusamy-NA-2013} and Allu and Halder \cite{Allu-Halder-BSM-2021}, respectively. 

\begin{lemA} (see \cite[Theorem 2]{Li-Ponnusamy-NA-2013})
	Let $f=h+\overline{g} \in \mathcal{P}^{0}_{\mathcal{H}}(\alpha)$ with $0\leq \alpha <1$. Then for any $k \geq 2$, 
	\begin{enumerate}
		\item[(i)] $\displaystyle |a_k| + |b_k|\leq \frac {2(1-\alpha)}{k}; $\\[1mm]
		
		\item[(ii)] $\displaystyle ||a_k| - |b_k||\leq \frac {2(1-\alpha)}{k};$\\[1mm]
		
		\item[(iii)] $\displaystyle |a_k|\leq \frac {2(1-\alpha)}{k}.$
	\end{enumerate}
	All the inequalities (i), (ii) and (iii) are sharp, with 
	\begin{align}\label{eq-1.6}
		f_{\alpha}(z)=(1-\alpha)(-z-2\log(1-z))+\alpha z
	\end{align}
	being the extremal function.
\end{lemA}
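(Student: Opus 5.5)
The plan is to turn the defining condition $\operatorname{Re}(h'(z)-\alpha)>|g'(z)|$ into a subordination statement for a family of analytic functions, and then read off the coefficient bounds. Fix $\epsilon$ with $|\epsilon|=1$ and set
\[
F_\epsilon(z)=h(z)+\epsilon g(z), \qquad F_\epsilon'(z)=h'(z)+\epsilon g'(z).
\]
Then $\operatorname{Re}(F_\epsilon'(z)-\alpha)\ge \operatorname{Re}(h'(z)-\alpha)-|g'(z)|>0$ for every $z\in\mathbb{D}$. Since $h'(0)=1$ and $g'(0)=0$, we have $F_\epsilon'(0)-\alpha=1-\alpha$. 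Therefore
\[
p_\epsilon(z)=\frac{F_\epsilon'(z)-\alpha}{1-\alpha}
\]
is a Carathéodory function with $p_\epsilon(0)=1$ and $\operatorname{Re}p_\epsilon>0$.

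By Carathéodory's coefficient lemma, the Taylor coefficients of $p_\epsilon$ satisfy $|c_n|\le 2$ for all $n\ge1$. Comparing coefficients of $z^{k-1}$ gives
\[
k(a_k+\epsilon b_k)=(1-\alpha)c_{k-1},
\]
and hence
\[
|a_k+\epsilon b_k|\le \frac{2(1-\alpha)}{k}\qquad\text{for every }|\epsilon|=1,\ k\ge2.
\]

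Each of the three bounds now follows by a suitable choice of $\epsilon$.
\begin{enumerate}
\item[(i)] Choose $\epsilon$ so that $\epsilon b_k$ has the same argument as $a_k$; this is possible for any $b_k$ (if $a_k=0$, any $\epsilon$ works). Then $|a_k+\epsilon b_k|=|a_k|+|b_k|$, which gives (i).
\item[(ii)] This follows from (i), since $\big||a_k|-|b_k|\big|\le |a_k|+|b_k|$. One could also take the opposite alignment of $\epsilon$ directly.
\item[(iii)] This is immediate from (i). Alternatively, average the inequality over $\epsilon$ and $-\epsilon$, using $a_k=\tfrac12\big((a_k+\epsilon b_k)+(a_k-\epsilon b_k)\big)$.
\end{enumerate}

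For sharpness, I would verify directly that $f_\alpha$ in \eqref{eq-1.6} belongs to the class. Here $g\equiv0$ and
\[
h'(z)=(1-\alpha)\Big(-1+\frac{2}{1-z}\Big)+\alpha=(1-\alpha)\frac{1+z}{1-z}+\alpha .
\]
Thus $\operatorname{Re}(h'-\alpha)>0=|g'|$, and the normalization holds. Expanding $-2\log(1-z)=\sum_{n\ge1}2z^n/n$ gives $a_k=2(1-\alpha)/k$ for $k\ge2$ and $b_k=0$. So equality holds in (i), (ii) and (iii) simultaneously.

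The only point needing care is that the Carathéodory bound is applied uniformly in $\epsilon$, with $\epsilon$ chosen after fixing $k$. This is legitimate because the positivity of $\operatorname{Re}(F_\epsilon'-\alpha)$ holds for every unimodular $\epsilon$. I expect no real obstacle beyond this bookkeeping.
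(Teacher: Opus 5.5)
Your proof is correct and complete. The paper gives no proof of Lemma A (it only cites Li and Ponnusamy), and your route is the standard argument for this class: apply Carathéodory's lemma to $(h'+\epsilon g'-\alpha)/(1-\alpha)$ for every unimodular $\epsilon$, align $\epsilon$ to obtain (i), deduce (ii) and (iii) from (i), and note that $f_\alpha$, with $a_k=2(1-\alpha)/k$ and $b_k=0$, gives equality in all three at once.
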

The growth theorem for functions in the class $ \mathcal{P}^{0}_{\mathcal{H}}(\alpha) $, established in \cite{Allu-Halder-BSM-2021}, is essential in our study for finding the Euclidean distance $d(f(0), \partial f(\mathbb{D}))$ between $f(0)$ and the boundary of $f(\mathbb{D})$.
\begin{lemB} (see \cite[Theorem 2.1]{Allu-Halder-BSM-2021})
	Let $f=h+\overline{g} \in \mathcal{P}^{0}_{\mathcal{H}}(\alpha)$ with $0\leq \alpha <1$. Then 
	\begin{equation*}
		|z|+ \sum\limits_{k=2}^{\infty}  \dfrac{2(1-\alpha)(-1)^{k-1}}{k} |z|^{k} \leq |f(z)| \leq |z|+ \sum\limits_{k=2}^{\infty}  \dfrac{2(1-\alpha)}{k} |z|^{k}.
	\end{equation*} 
	Both  inequalities are sharp.
\end{lemB}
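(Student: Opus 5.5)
The statement to prove is Lemma B, the growth estimate
\[
|z|+\sum_{k\ge2}\frac{2(1-\alpha)(-1)^{k-1}}{k}|z|^k\le |f(z)|\le |z|+\sum_{k\ge2}\frac{2(1-\alpha)}{k}|z|^k .
\]
Note that the right-hand side equals $-(1-2\alpha)r-2(1-\alpha)\log(1-r)$ and the left-hand side equals $-(1-2\alpha)r+2(1-\alpha)\log(1+r)$, where $r=|z|$.

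The plan is to work with the shear-type functions $h+\epsilon g$ for $|\epsilon|=1$. The condition $\operatorname{Re}(h'-\alpha)>|g'|$ gives
\[
\operatorname{Re}\bigl(h'(z)+\epsilon g'(z)\bigr)>\alpha \quad\text{for every } |\epsilon|=1 .
\]
Since $g'(0)=0$ and $h'(0)=1$, the function
\[
p_\epsilon=\frac{h'+\epsilon g'-\alpha}{1-\alpha}
\]
is a Carathéodory function with $p_\epsilon(0)=1$. The standard bounds for such functions then give
\[
\frac{1-r}{1+r}\le \operatorname{Re}p_\epsilon(z)\le|p_\epsilon(z)|\le\frac{1+r}{1-r}.
\]
Consequently, uniformly in $\epsilon$,
\[
\alpha+(1-\alpha)\frac{1-r}{1+r}\le |h'(z)+\epsilon g'(z)|\le \alpha+(1-\alpha)\frac{1+r}{1-r}.
\]
Choosing $\epsilon$ to align the phases shows that both $|h'|+|g'|$ and $|h'|-|g'|$ lie between these two bounds.

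\textbf{Upper bound.} Integrate $f$ along the radial segment $[0,z]$, writing $z=re^{i\theta}$. Then
\[
|f(z)|\le\int_0^r\bigl(|h'(te^{i\theta})|+|g'(te^{i\theta})|\bigr)\,dt\le\int_0^r\Bigl(\alpha+(1-\alpha)\frac{1+t}{1-t}\Bigr)dt .
\]
Expanding $\frac{1+t}{1-t}=1+2\sum_{k\ge1}t^k$ and integrating termwise yields exactly $r+\sum_{k\ge2}\frac{2(1-\alpha)}{k}r^k$.

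\textbf{Lower bound.} This is the main obstacle, because it needs a preimage curve rather than a radial segment. The map $f$ is univalent and sense-preserving (Li--Ponnusamy), with $f(0)=0$. Let $R=\min_{|\zeta|=r}|f(\zeta)|$, and let $\Gamma=f^{-1}([0,f(z)])$ be the preimage of the straight segment from $0$ to $f(z)$. Then
\[
|f(z)|=\int_\Gamma|df|\ge\int_\Gamma\bigl(|h'(\zeta)|-|g'(\zeta)|\bigr)|d\zeta| .
\]
Since $\Gamma$ meets every circle $|\zeta|=t$ with $t\le r$, and the integrand is bounded below by a decreasing function of $|\zeta|$, this gives
\[
|f(z)|\ge\int_0^r\Bigl(\alpha+(1-\alpha)\frac{1-t}{1+t}\Bigr)dt .
\]
Expanding $\frac{1-t}{1+t}=1+2\sum_{k\ge1}(-1)^kt^k$ yields $r+\sum_{k\ge2}\frac{2(1-\alpha)(-1)^{k-1}}{k}r^k$.

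\textbf{Sharpness.} Use the function $f_\alpha$ of \eqref{eq-1.6}, for which $g\equiv0$ and $h'(z)=\alpha+(1-\alpha)\frac{1+z}{1-z}$. At $z=r$ the upper bound is attained. At $z=-r$, $f_\alpha(-r)$ is real and equal in modulus to the lower series, so the lower bound is attained as well.
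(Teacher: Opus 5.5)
The paper gives no proof of Lemma B; it is quoted from Allu--Halder, so there is no in-paper argument to compare against. Your argument is the standard growth-theorem proof for this class, and it is correct. The function $p_\epsilon=(h'+\epsilon g'-\alpha)/(1-\alpha)$ is indeed a normalized Carath\'eodory function. Choosing the phase of $\epsilon$ gives $|h'|+|g'|\le\alpha+(1-\alpha)\frac{1+r}{1-r}$ and $|h'|-|g'|\ge\alpha+(1-\alpha)\frac{1-r}{1+r}$. The termwise integrations and the closed forms are right, and $f_\alpha(r)$ and $-f_\alpha(-r)$ equal the upper and lower series exactly.

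The upper bound could also be read off from Lemma A(i), since $1+2(1-\alpha)\sum_{k\ge1}t^k=\alpha+(1-\alpha)\frac{1+t}{1-t}$. The lower bound, however, really needs your pointwise real-part estimate. Coefficient bounds alone only give $|h'|-|g'|\ge 1-2(1-\alpha)\frac{r}{1-r}$, which is too weak.

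One step should be tightened. You introduce $R=\min_{|\zeta|=r}|f(\zeta)|$ but then pull back $[0,f(z)]$ for an arbitrary $z$ and never use $R$. The domain $f(\mathbb{D})$ need not be starlike with respect to $0$, so that segment can leave $f(\mathbb{D})$. In that case $f^{-1}([0,f(z)])$ is not a curve from $0$ to $z$, and the identity $|f(z)|=\int_\Gamma|df|$ fails.

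Use $R$ as you evidently intended. Pick $z_0$ with $|z_0|=r$ and $|f(z_0)|=R$. Since $f$ is a homeomorphism with $f(0)=0$ and $f(\{|\zeta|=r\})\subset\{|w|\ge R\}$, the disk $\{|w|<R\}$ lies in $f(\mathbb{D}_r)$. Hence $\Gamma=f^{-1}([0,f(z_0)])$ is a smooth arc (smooth because $J_f>0$) in $\overline{\mathbb{D}_r}$ from $0$ to $z_0$, and
\[
R=\int_\Gamma|df|\ge\int_\Gamma\phi(|\zeta|)\,|d\zeta|\ge\int_0^r\phi(t)\,dt,\qquad \phi(t)=\alpha+(1-\alpha)\frac{1-t}{1+t}.
\]
The last step uses only $\phi\ge0$ and $|d\zeta|\ge d|\zeta|$ along $\Gamma$, not monotonicity of $\phi$. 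Then $|f(z)|\ge R$ for every $|z|=r$ completes the lower bound.

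Alternatively, keep an arbitrary $z$ and note the following: if $[0,f(z)]$ exits $f(\mathbb{D})$, the preimage of its initial piece runs out to $\partial\mathbb{D}$ because $f$ is proper. This only strengthens the estimate.
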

In view of the Lemma A  in~\cite{Li-Ponnusamy-NA-2013}, the Bohr inequality for the class $\mathcal{P}^{0}_{\mathcal{H}}(\alpha)$ is obtained in terms of distance formulation~\eqref{eq-1.4} and shown that the Bohr radius for the class $\mathcal{P}^{0}_{\mathcal{H}}(\alpha)$ is best possible.

\begin{thmA}[\cite{Allu-Halder-BSM-2021}, Theorem~2.2]
	Let $f \in \mathcal{P}_{\mathcal{H}}^{0}(\alpha)$ be given by \eqref{eq-1.3} with $0 \leq \alpha < 1$. Then 
	\begin{equation}\label{eq-1.7}
		|z| + \sum_{n=2}^{\infty} (|a_n| + |b_n|) |z|^n \leq d(f(0), \partial f(\mathbb{D}))
	\end{equation}
	\textit{holds for $|z| = r \le r_f$, where $r_f$ is the unique positive root of}
\begin{align*}
	r + 2(1 - \alpha)\sum_{n=2}^{\infty} \frac{r^n}{n} = 1 + 2(1 - \alpha)\sum_{n=2}^{\infty} \frac{(-1)^{n-1}}{n} 
\end{align*}
\noindent
\textit{in $(0,1)$. The radius $r_f$ is the best possible.}
\end{thmA}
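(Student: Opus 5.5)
Our plan is to first bound the distance $d(f(0),\partial f(\mathbb{D}))$ from below by Lemma B, and then bound the majorant series from above by Lemma A(i); comparing the two bounds gives the radius. Since $f(0)=h(0)+\overline{g(0)}=0$, we have $d(f(0),\partial f(\mathbb{D}))=\liminf_{|z|\to 1}|f(z)|$. The lower estimate in Lemma B gives
\[
|f(z)|\ge |z|+2(1-\alpha)\sum_{k=2}^{\infty}\frac{(-1)^{k-1}}{k}|z|^k .
\]
Letting $|z|\to1^-$ is legitimate because the alternating series converges at $1$ by Abel's theorem. We therefore get
\[
d(f(0),\partial f(\mathbb{D}))\ge 1+2(1-\alpha)\sum_{n=2}^{\infty}\frac{(-1)^{n-1}}{n}=1+2(1-\alpha)(\log 2-1).
\]
This quantity is positive, since $1-2(1-\log 2)>0$.

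On the other side, Lemma A(i) gives
\[
|z|+\sum_{n\ge2}(|a_n|+|b_n|)r^n\le r+2(1-\alpha)\sum_{n\ge2}\frac{r^n}{n}=:\Phi(r).
\]
The function $\Phi$ is continuous and strictly increasing on $[0,1)$, with $\Phi(0)=0$ and $\Phi(r)\to\infty$ as $r\to1^-$. Hence the equation $\Phi(r)=1+2(1-\alpha)(\log2-1)$ has a unique root $r_f\in(0,1)$, and \eqref{eq-1.7} holds for all $r\le r_f$.

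For sharpness we would use the extremal function $f_\alpha$ from \eqref{eq-1.6}, which is analytic with $g\equiv0$. Its coefficients are $a_n=2(1-\alpha)/n$ for $n\ge2$, and $a_1=-(1-\alpha)+2(1-\alpha)+\alpha=1$, so $f_\alpha$ lies in the class. Then $h'(z)=\alpha+(1-\alpha)\frac{1+z}{1-z}$, so $\operatorname{Re}(h'-\alpha)>0=|g'|$. For this function the majorant series equals $\Phi(r)$ exactly.

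The main obstacle is showing that $d(0,\partial f_\alpha(\mathbb{D}))$ equals the lower bound, that is, that the infimum of $|f_\alpha|$ over the boundary is attained at $z=-1$. We would argue this directly: $f_\alpha$ is real on $(-1,0)$ with $f_\alpha(-1)=-1+2(1-\alpha)(\log2-1)$ (using $-(1-\alpha)(-1)+\alpha(-1)=1-2\alpha$ and $-2(1-\alpha)\log 2$). Thus the boundary point $f_\alpha(-1)$ has modulus exactly the lower bound $1+2(1-\alpha)(\log2-1)$. Combined with the general lower bound, this gives equality for the distance. It then follows that for $r>r_f$ the left side $\Phi(r)$ strictly exceeds $d(f_\alpha(0),\partial f_\alpha(\mathbb{D}))$, so $r_f$ cannot be improved.
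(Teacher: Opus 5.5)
Your proof is correct and follows essentially the same route as the paper's argument for Theorem~\ref{Th-2.1} specialized to $p=1$, $\mu=\lambda\equiv 0$: Lemma B plus a $\liminf$ gives \eqref{eq-2.8}, Lemma A(i) bounds the majorant, monotonicity gives the unique root, and $f_\alpha$ gives sharpness. You also actually justify the distance identity \eqref{eq-2.11}, which the paper only asserts. One sign slip to fix: $f_\alpha(-1)=1-2\alpha-2(1-\alpha)\ln 2=-1-2(1-\alpha)(\ln 2-1)$, not $-1+2(1-\alpha)(\ln 2-1)$; with the correct sign its modulus is exactly the lower bound, so your conclusion stands.
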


\section{\bf Main Results}
In this section, we establish Bohr-type inequalities for certain expression of the quantity $S_r/\pi$ for certain class of harmonic mappings. Inspired by the results in \cite{Allu-Halder-BSM-2021} and continuing the study, the discussions above motivate us to establish improved Bohr inequalities for the class $\mathcal{P}_{\mathcal{H}}^{0}(\alpha)$.\vspace{1.2mm}

Using the Lemmas A and B, we prove the following sharp improved Bohr inequality for the class $\mathcal{P}_{\mathcal{H}}^{0}(\alpha)$.

\begin{thm}\label{Th-2.1}
	Let $f \in \mathcal{P}_{\mathcal{H}}^{0}(\alpha)$ be given by \eqref{eq-1.3} with $0 \leq \alpha < 1$ and $\mu, \lambda : [0, \infty) \to [0, \infty)$ be monotone increasing functions. Then, for $p \ge 1$, we have
	\begin{align}\label{eq-2.1}
		\mathcal{A}_{f, p, \mu, \lambda}(r) := |z|^p + \sum_{n=2}^{\infty} (|a_n| + |b_n|) |z|^n + \mu\left(\frac{S_{|z|}}{\pi}\right) + \lambda\left(\left(\frac{S_{|z|}}{\pi}\right)^2\right) \le d(f(0), \partial f(\mathbb{D})) 
	\end{align}
		holds for $|z|=r\leq R_f(\alpha)$, where $R_f(\alpha)$ is the unique root in $(0,1)$ of the equation
	\[
	J_1(r):=
	r^p - 2(1-\alpha)(r+\ln(1-r))
	+\mu\big(M_\alpha(r)\big)
	+\lambda\big(M_\alpha(r)^2\big)
	-1 -2(1-\alpha)(\ln 2 - 1)
	=0.
	\]
	The radius $R_f(\alpha)$ is best possible.
\end{thm}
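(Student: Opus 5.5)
The plan is to bound the left-hand side of \eqref{eq-2.1} from above by a function of $r$ alone, and to bound $d(f(0),\partial f(\mathbb{D}))$ from below by a constant. Both bounds come from Lemmas A and B, and both become equalities for the function $f_\alpha$ in \eqref{eq-1.6}.

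\textbf{Step 1 (distance).} Since $f(0)=0$ and $f$ is univalent, we have $d(f(0),\partial f(\mathbb{D}))=\liminf_{|z|\to1}|f(z)|$. The lower estimate of Lemma B, letting $|z|\to 1$, gives
\[
d(f(0),\partial f(\mathbb{D}))\ge 1+2(1-\alpha)\sum_{n=2}^{\infty}\frac{(-1)^{n-1}}{n}=1+2(1-\alpha)(\ln 2-1),
\]
and this constant is positive. Here we use $\sum_{n\ge2}(-1)^{n-1}/n=\ln2-1$.

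\textbf{Step 2 (majorant and area terms).} By Lemma A(i),
\[
\sum_{n\ge2}(|a_n|+|b_n|)r^n\le 2(1-\alpha)\sum_{n\ge2}\frac{r^n}{n}=-2(1-\alpha)\bigl(r+\ln(1-r)\bigr).
\]
For the area term, I take $S_r$ to be the area of $f(\mathbb{D}_r)$, namely $\int_{\mathbb{D}_r}J_f\,dA$, so that
\[
\frac{S_r}{\pi}=\sum_{n\ge1}n\bigl(|a_n|^2-|b_n|^2\bigr)r^{2n}\le r^2+\sum_{n\ge2}n|a_n|^2r^{2n}.
\]
Lemma A(iii) then gives
\[
\frac{S_r}{\pi}\le r^2+4(1-\alpha)^2\sum_{n\ge2}\frac{r^{2n}}{n}=:M_\alpha(r),
\]
which is an increasing function of $r$. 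Since $\mu$ and $\lambda$ are increasing, $\mu(S_r/\pi)\le\mu(M_\alpha(r))$ and $\lambda((S_r/\pi)^2)\le\lambda(M_\alpha(r)^2)$. Combining this with Step 1 yields
\[
\mathcal{A}_{f,p,\mu,\lambda}(r)-d(f(0),\partial f(\mathbb{D}))\le J_1(r).
\]

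\textbf{Step 3 (the root).} Each term of $J_1$ is nondecreasing in $r$. Moreover $J_1(0)=-\bigl(1+2(1-\alpha)(\ln2-1)\bigr)<0$, and $J_1(r)\to+\infty$ as $r\to1^-$ because of the logarithm. Hence $J_1\le 0$ exactly on $[0,R_f(\alpha)]$, and uniqueness of the root follows from strict increase, since $-2(1-\alpha)(r+\ln(1-r))$ is strictly increasing.

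The main obstacle is that $\mu$ and $\lambda$ are only assumed monotone, not continuous, so $J_1$ may jump over $0$. I would handle this by reading $R_f(\alpha)$ as $\sup\{r: J_1(r)\le0\}$, or by tacitly assuming continuity as the statement implicitly does.

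\textbf{Step 4 (sharpness).} Take $f=f_\alpha$, for which $b_n=0$ and $a_n=2(1-\alpha)/n$. Then $S_r/\pi=M_\alpha(r)$ exactly, so every inequality in Step 2 is an equality. Also $f_\alpha(-1)=(1-2\alpha)-2(1-\alpha)\ln2$ is real, and its modulus equals $1+2(1-\alpha)(\ln2-1)$. Since $f_\alpha$ is real on $(-1,0)$ with $|f_\alpha(z)|$ increasing toward $-1$, the point $f_\alpha(-1)$ lies on $\partial f_\alpha(\mathbb{D})$, and together with Step 1 this gives $d=|f_\alpha(-1)|$. Therefore $\mathcal{A}_{f_\alpha}(r)-d=J_1(r)>0$ for $r>R_f(\alpha)$, which shows that the radius cannot be enlarged.
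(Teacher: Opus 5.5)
Your proposal is correct and follows the paper's route. Both arguments use Lemma~B for the lower bound on $d(f(0),\partial f(\mathbb{D}))$, Lemma~A for the coefficient sum and for $S_r/\pi\le M_\alpha(r)$, the monotonicity of $J_1$, and $f_\alpha$ for sharpness. You are more careful than the paper in two places: you justify $d(f_\alpha(0),\partial f_\alpha(\mathbb{D}))=|f_\alpha(-1)|$, where the paper only says it ``can be shown'', and you get strict increase of $J_1$ from the logarithmic term rather than assuming $\mu',\lambda'>0$.

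There is one small slip. In fact $J_1(0)=\mu(0)+\lambda(0)-\bigl(1+2(1-\alpha)(\ln 2-1)\bigr)$, not $-\bigl(1+2(1-\alpha)(\ln 2-1)\bigr)$. So $J_1(0)<0$ needs $\mu(0)+\lambda(0)$ to be small (for instance $\mu(0)=\lambda(0)=0$), which the statement tacitly assumes when it asserts a root in $(0,1)$.
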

\begin{table}[htbp]
	\centering
	\caption{Numerical values of the sharp radius $R_f(\alpha)$ for
		$p=1$ and $p=2$, with $\mu(s)=\frac{16}{9}s$ and $\lambda(s)=0$.}
	\label{tab:p1-p2-mu16}
	\begin{tabular}{c|cc}
		\hline
		$\alpha$ & $R_f(\alpha)$ for $p=1$ & $R_f(\alpha)$ for $p=2$ \\
		\hline
		$0.0$ & $0.217485$ & $0.281713$ \\
		$0.2$ & $0.270101$ & $0.336115$ \\
		$0.4$ & $0.324476$ & $0.393286$ \\
		$0.6$ & $0.383201$ & $0.455754$ \\
		$0.9$ & $0.483436$ & $0.562311$ \\
		\hline
	\end{tabular}
\end{table}
\begin{figure}[htbp]
	\centering
	\includegraphics[width=0.98\textwidth]{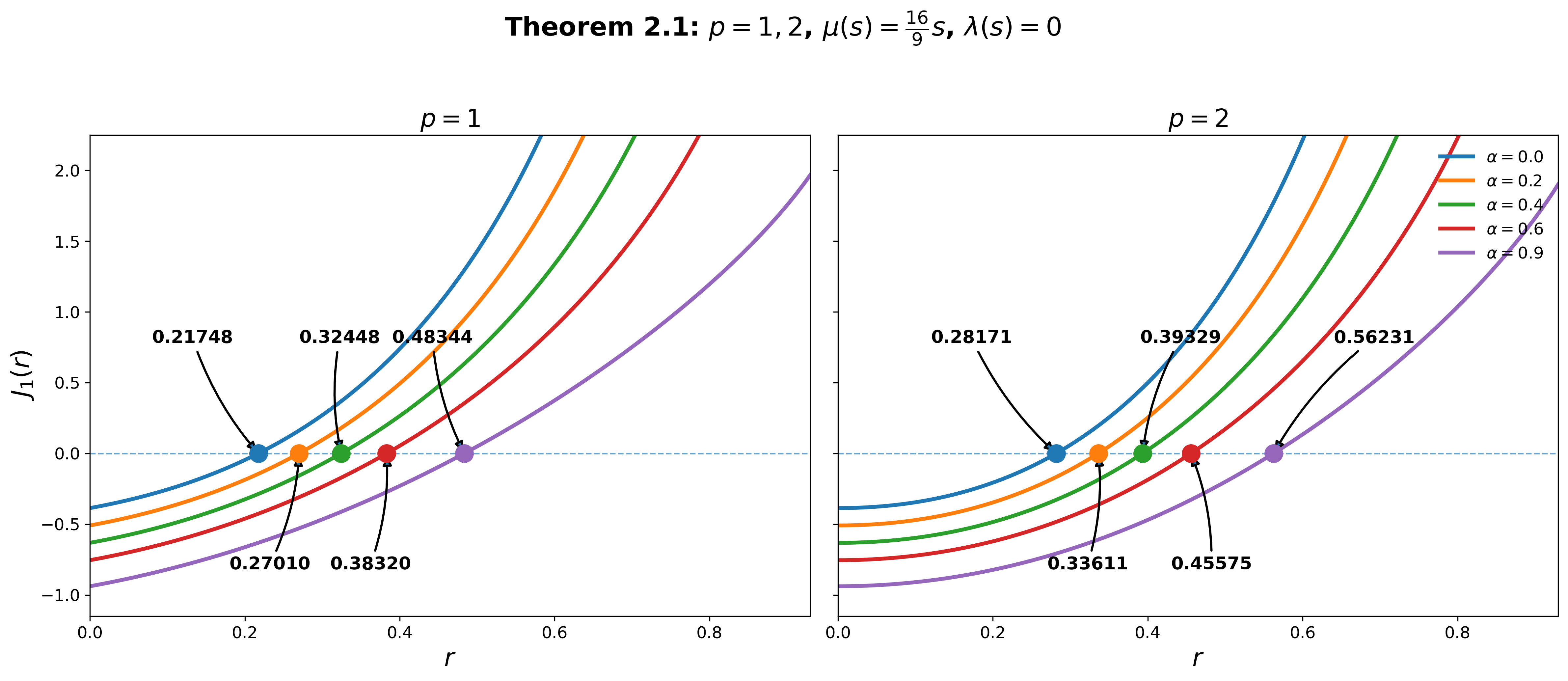}
	\caption{Graphical representation of the equation $J_1(r)=0$ in Theorem~2.1
		for $p=1$ and $p=2$, with $\mu(s)=\frac{16}{9}s$ and $\lambda(s)=0$,
		for $\alpha=0,\,0.2,\,0.4,\,0.6,$ and $0.9$. The marked points indicate
		the corresponding sharp radii $R_f(\alpha)$.}
	\label{fig:p1-p2-mu16}
\end{figure}
\begin{rem}
Theorem~\ref{Th-2.1} generalizes and refines Theorem~A in several directions:
\begin{itemize}
	\item Setting $\mu(s) = \lambda(s) \equiv 0$ for $s \in [0, \infty)$ along with $p = 1$ reduces inequality~\eqref{eq-2.1} precisely to~\eqref{eq-1.7}.
	\item For $\mu(s) = (16/9)s$, $\lambda(s) \equiv 0$, and $p = 1$, inequality~\eqref{eq-2.1} serves as the harmonic analogue of the classical result due to Kayumov and Ponnusamy \cite[Eq.~(2), Theorem~1]{Kayumov-Ponnusamy-CRAS-2018}.
	\item Choosing $\mu(s) = (16/9)s$ and $\lambda(s) = (18.6095\dots)s$ with $p = 1$ yields the harmonic counterpart of the inequality established in \cite[Eq.~(3), Theorem~1]{Ismagilov-Kayumov-Ponnusamy-JMAA-2020}.
	\item When $\lambda(s) \equiv 0$, $p = 1$, and $\mu(s) = P_k(s) = \sum_{j=1}^k \lambda_j s^j$ is a polynomial of degree $k$ with non-negative coefficients ($\lambda_k \neq 0$), Theorem~\ref{Th-2.1} specializes to \cite[Theorem~2.2]{Ahamed-Allu-JRMS-2024}.
\end{itemize}
Consequently, Theorem~\ref{Th-2.1} provides a comprehensive and unified framework extending several existing results concerning the Bohr inequality.
\end{rem}
In view of the sharp bounds of $|f(z)|$ in Lemma B, considering integral powers of $|f(z)|$, we obtain the following result.
\begin{thm}\label{Th-2.2}
	Let $f\in\mathcal{P}_{\mathcal{H}}^{0}(\alpha)$, where $0\leq\alpha< 1$, be given by \eqref{eq-1.3} and
	$\mu:[0,\infty)\to[0,\infty)$ be monotone increasing functions.
	Then, for $p\geq 1$, we have
	\begin{align}\label{eq-2.2}
		\mathcal{I}_{f, p, \mu}(r) := |f(z)|^p + \sum_{n=2}^{\infty} (|a_n| + |b_n|) |z|^n + \mu\left(\frac{S_{|z|}}{\pi}\right) \le d(f(0), \partial f(\mathbb{D})) 
	\end{align}
	\textit{holds for $|z| = r \le R_f^*(\alpha)$, where $R_f^*(\alpha)$ is the unique root in $(0, 1)$ of the equation}
	\begin{align*}
		J_2(r) := &\left(r - 2(1-\alpha)(r+\ln(1-r))\right)^{p} - 2(1-\alpha)(r+\ln(1-r))+ \mu \left(M_{\alpha}(r)\right) \\
		&-1 -2(1-\alpha)(\ln 2 - 1) = 0.
	\end{align*}
	The radius $R_f^*(\alpha)$ is best possible.
\end{thm}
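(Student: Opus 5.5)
The plan is to bound each term of $\mathcal{I}_{f,p,\mu}(r)$ from above by a function of $r$ alone, bound $d(f(0),\partial f(\mathbb{D}))$ from below by a constant, and then show that $f_\alpha$ from \eqref{eq-1.6} attains all these bounds at once.

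Throughout, I take $M_\alpha(r)$ to be the majorant of $S_r/\pi$ obtained from Lemma A. Since $f\in\mathcal{P}^0_{\mathcal H}(\alpha)$ is harmonic, $S_r/\pi=\sum_{n\ge1} n(|a_n|^2+|b_n|^2)r^{2n}$ with $a_1=1$, $b_1=0$. Lemma A(i) gives $n(|a_n|^2+|b_n|^2)\le n(|a_n|+|b_n|)^2\le 4(1-\alpha)^2/n$, so
\[
\frac{S_r}{\pi}\le r^2+4(1-\alpha)^2\sum_{n=2}^\infty \frac{r^{2n}}{n}=:M_\alpha(r).
\]
This identification of $M_\alpha$ is the one point I must check against the paper's definition. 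The same identification is needed in Theorem~\ref{Th-2.1}.

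\textbf{Step 1 (lower bound for the distance).} Since $f(0)=0$ and $f$ is univalent, $d(f(0),\partial f(\mathbb{D}))=\liminf_{|z|\to1}|f(z)|$. The left inequality of Lemma B, together with $\sum_{k\ge2}(-1)^{k-1}/k=\ln2-1$, gives
\[
d(f(0),\partial f(\mathbb{D}))\ge 1+2(1-\alpha)(\ln2-1).
\]

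\textbf{Step 2 (upper bounds for the terms).} By the right inequality of Lemma B and $\sum_{k\ge2}r^k/k=-r-\ln(1-r)$,
\[
|f(z)|\le r-2(1-\alpha)(r+\ln(1-r)),
\]
and this upper bound is nonnegative. Since $t\mapsto t^p$ is increasing, the same holds after raising both sides to the power $p$. Lemma A(i) gives $\sum_{n\ge2}(|a_n|+|b_n|)r^n\le -2(1-\alpha)(r+\ln(1-r))$. Monotonicity of $\mu$ gives $\mu(S_r/\pi)\le\mu(M_\alpha(r))$. Adding these and subtracting Step 1 yields
\[
\mathcal{I}_{f,p,\mu}(r)-d(f(0),\partial f(\mathbb{D}))\le J_2(r).
\]

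\textbf{Step 3 (the root $R_f^*(\alpha)$).} $J_2$ is continuous and increasing on $[0,1)$, because every summand is increasing. At the left endpoint, $J_2(0)\le \mu(0)-1+2(1-\alpha)(1-\ln2)$, which is negative at least when $\mu(0)=0$, since $2(1-\ln2)<1$. At the right endpoint, $J_2(r)\to+\infty$ as $r\to1^-$ because of the $-\ln(1-r)$ terms. Hence $J_2$ has a unique root $R_f^*(\alpha)\in(0,1)$, and $J_2\le0$ on $[0,R_f^*(\alpha)]$. This proves \eqref{eq-2.2} for $r\le R_f^*(\alpha)$.

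\textbf{Step 4 (sharpness).} Take $f=f_\alpha$, with $h=f_\alpha$ and $g\equiv0$. It lies in the class because $h'(z)=(1-\alpha)\frac{1+z}{1-z}+\alpha$ has $\operatorname{Re}h'>\alpha$. Its coefficients are $a_n=2(1-\alpha)/n$ and $b_n=0$, so the coefficient bound is an equality. Also $S_r/\pi=\sum n|a_n|^2r^{2n}=M_\alpha(r)$ exactly, and $f_\alpha(r)=r-2(1-\alpha)(r+\ln(1-r))$ for $z=r\in(0,1)$.

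For the distance, Step 1 gives $d\ge 1+2(1-\alpha)(\ln2-1)$. Conversely, $f_\alpha(-r)\to -1-2(1-\alpha)(\ln2-1)$ as $r\to1^-$, and this limit is a boundary point of $f_\alpha(\mathbb{D})$. Hence $d$ equals exactly $1+2(1-\alpha)(\ln2-1)$ for $f_\alpha$.

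Consequently, at $z=r$ we get $\mathcal{I}_{f_\alpha,p,\mu}(r)-d=J_2(r)$, which is positive for $r>R_f^*(\alpha)$. So the radius cannot be improved.

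The main obstacle is Step 4: confirming that the boundary distance of $f_\alpha(\mathbb{D})$ is attained in the direction of $z=-1$, which requires justifying that the limit point is on $\partial f_\alpha(\mathbb{D})$. A secondary point is verifying the exact form of $M_\alpha$ used in the paper.
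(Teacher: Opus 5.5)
Your proposal is correct and is essentially the paper's proof: you get a lower bound for $d(f(0),\partial f(\mathbb{D}))$ from Lemma B, majorize each term by Lemma A/Lemma B and the monotonicity of $\mu$, and prove sharpness with $f_\alpha$. Your handling of the strict monotonicity of $J_2$ (no differentiability of $\mu$ needed), of the exact boundary distance for $f_\alpha$, and of the hidden requirement $\mu(0)<1-2(1-\alpha)(1-\ln 2)$ (and tacitly continuity of $\mu$) for a root to exist is more careful than the paper's. One small slip: the paper's $S_r$ is the area of $f(\mathbb{D}_r)$, so $S_r/\pi=\sum_{n\ge1} n(|a_n|^2-|b_n|^2)r^{2n}$ rather than with a plus sign; this only makes your bound $S_r/\pi\le M_\alpha(r)$ easier, and your $M_\alpha$ agrees with the paper's.
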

\begin{rem}
In particular, when $\mu(s) \equiv 0$, inequality~\eqref{eq-2.2} of Theorem~\ref{Th-2.2} provides the harmonic analogues of the results established in \cite[Corollary~1]{Kayumov-Khammatova-Ponnusamy-JMAA-2021} for $p = 1$ and $p = 2$. Moreover, for the choice $p = 1$ and $\mu(s) = 2(\sqrt{5} - 1)s$, inequality~\eqref{eq-2.2} reduces to the harmonic counterpart of \cite[Theorem~3, Eq.~(5)]{Ismagilov-Kayumov-Ponnusamy-JMAA-2020}.
\end{rem}
\begin{table}[htbp]
	\centering
	\caption{Numerical values of the sharp radius $R_f^{*}(\alpha)$ for
		$p=1$ and $p=2$ with $\mu(s)\equiv0$.}
	\label{tab:theorem2.2-mu0}
	\begin{tabular}{c|cc}
		\hline
		$\alpha$ & $R_f^{*}(\alpha)$ for $p=1$
		& $R_f^{*}(\alpha)$ for $p=2$ \\
		\hline
		$0.0$ & $0.243755$ & $0.338055$ \\
		$0.2$ & $0.311487$ & $0.399540$ \\
		$0.4$ & $0.386618$ & $0.466807$ \\
		$0.6$ & $0.478477$ & $0.548727$ \\
		$0.9$ & $0.718738$ & $0.761963$ \\
		\hline
	\end{tabular}
\end{table}
\begin{figure}[htbp]
	\centering
	\includegraphics[width=0.98\textwidth]{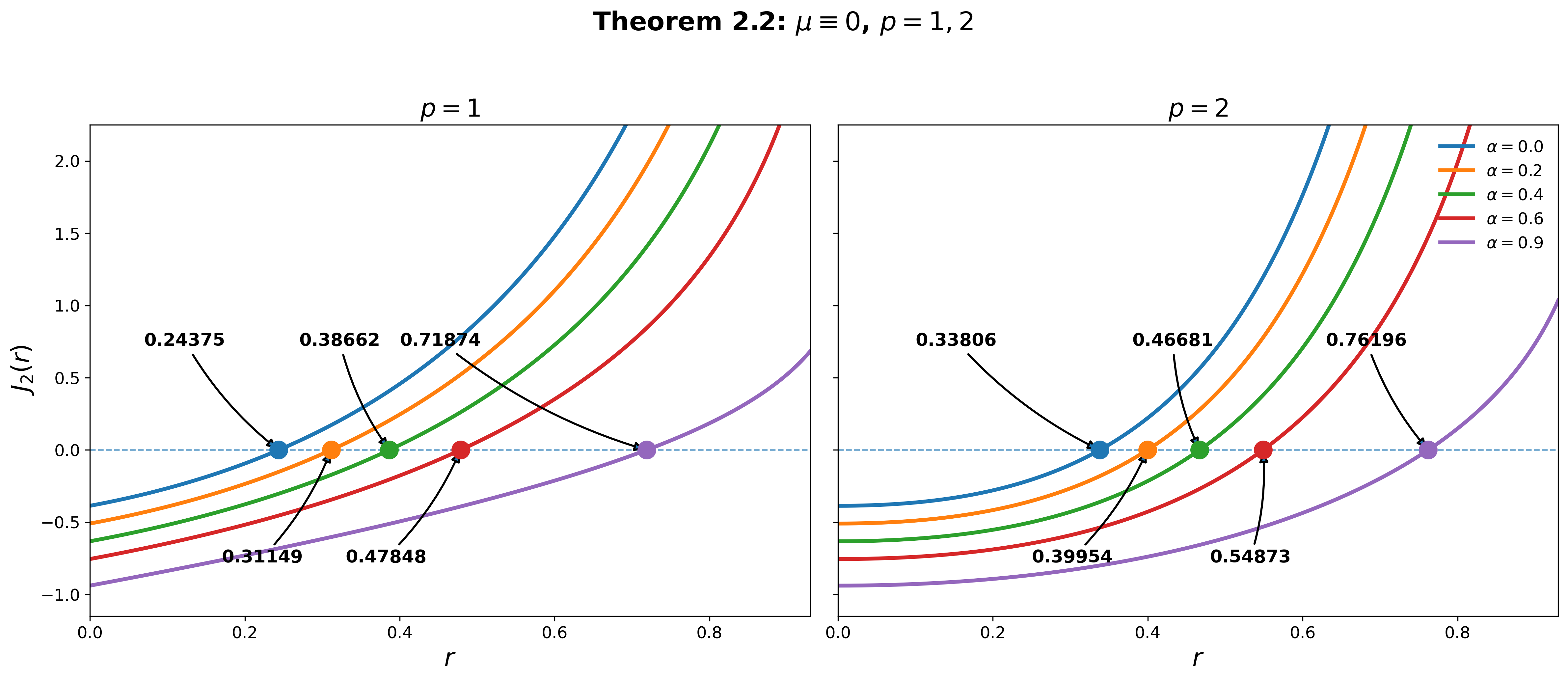}
	\caption{Graphical representation of the equation $J_2(r)=0$ in Theorem~2.2
		for $p=1$ and $p=2$, with $\mu(s)\equiv0$, corresponding to
		$\alpha=0,\,0.2,\,0.4,\,0.6,$ and $0.9$. The marked points indicate
		the corresponding sharp radii $R_f^{*}(\alpha)$.}
	\label{fig:theorem2.2-mu0}
\end{figure}
\subsection{\bf Proof of the main results}
Before, starting the proof, we need some preparation. For $f \in \mathcal{P}_{\mathcal{H}}^{0}(\alpha)$, the Jacobian of $f$ is denoted by $J_f$ which is defined by
\begin{equation*}
	J_f(z) := |f_z(z)|^2 - |f_{\bar{z}}(z)|^2 = |h'(z)|^2 - |g'(z)|^2 \quad \text{for } z \in \mathbb{D}.
\end{equation*}
It is well-known that (see [\cite{Duren-CUP-2004}, p.113]) the area of the image of disk $\mathbb{D}_r := \{z \in \mathbb{C} : |z| < r\}$ under the harmonic map $f = h + \bar{g}$ is
\begin{align}\label{eq-2.3}
	S_r = \iint_{\mathbb{D}_r} J_f(z)\,dxdy = \iint_{\mathbb{D}_r} \left(|h'(z)|^2 - |g'(z)|^2\right) dxdy. 
\end{align}
In view of Lemma A and \eqref{eq-2.3}, we obtain
\begin{align}\label{eq-2.4}
	\frac{S_r}{\pi} = \frac{1}{\pi} \iint_{\mathbb{D}_r} \left(|h'(z)|^2 - |g'(z)|^2\right) dxdy \le r^2+4(1-\alpha)^2\sum_{n=2}^{\infty}\frac{r^{2n}}{n}:=M_{\alpha}(r)
\end{align}
Let $f=h+\overline{g} \in \mathcal{P}^{0}_{\mathcal{H}}(\alpha)$ with $0\leq \alpha <1$ be given by  \eqref{eq-1.3}.
Then in view of Lemma B, we have 
\begin{align}\label{eq-2.5}
	|f(z)|\geq|z|+ \sum\limits_{n=2}^{\infty}  \dfrac{2(1-\alpha)(-1)^{n-1}}{n} |z|^{n},\;\mbox{for}\;|z|<1.
\end{align}
By taking $\liminf$ as $|z|\rightarrow 1$ on the both sides of \eqref{eq-2.5}, we obtain 
\begin{align}\label{eq-2.6}
	\liminf_{|z|\rightarrow1} |f(z)|\geq1+ \sum\limits_{n=2}^{\infty}  \dfrac{2(1-\alpha)(-1)^{n-1}}{n}.
\end{align}
The Euclidean distance between $f(0)$ and the boundary of $f(\mathbb{D})$ is given by 
\begin{align}\label{eq-2.7}
	d(f(0),\partial f(\mathbb{D}))=\liminf_{|z|\rightarrow1}|f(z)-f(0)|.
\end{align}
Since $f(0)=0$, and hence, we have $|f(z)-f(0)|=|f(z)|$. Therefore, from \eqref{eq-2.6} and \eqref{eq-2.7}, we obtain
\begin{align}\label{eq-2.8}
	d(f(0),\partial f(\mathbb{D}))\geq 1+ \sum\limits_{n=2}^{\infty}  \dfrac{2(1-\alpha)(-1)^{n-1}}{n}=1 + 2(1-\alpha)(\ln 2 - 1).
\end{align}
\begin{proof}[\bf Proof of the Theorem \ref{Th-2.1}]
	In view of Lemma A, for $|z| = r$, from \eqref{eq-2.4} we obtain
	\begin{align}\label{eq-2.9}
		 \mathcal{A}_{f, p, \mu, \lambda}(r) &\le r^p + \sum_{n=2}^{\infty} \frac{2(1-\alpha)r^n}{n} + \mu \left( M_\alpha(r) \right) + \lambda \left( (M_\alpha(r))^2 \right) \\&=\nonumber r^p - 2(1-\alpha)(r+\ln(1-r))+ \mu \left( M_\alpha(r) \right) + \lambda \left( (M_\alpha(r))^2 \right):= \mathcal{A}^*_{f, p, \mu, \lambda}(r).
	\end{align}
	In fact, using \eqref{eq-2.8}, it is easy to see that the desired inequality
	\begin{align*}
		\mathcal{A}^*_{f, p, \mu, \lambda}(r) \le 1 + \sum_{n=2}^{\infty} \frac{2(1-\alpha)(-1)^{n-1}}{n}=1 + 2(1-\alpha)(\ln 2 - 1) \le d(f(0), \partial f(\mathbb{D}))
	\end{align*}
	holds for $r \le R_f(\alpha)$, where $R_f(\alpha)$ is the smallest root of the equation $J_1(r) = 0$, where $J_1 : [0, 1] \to \mathbb{R}$ is defined in the statement of the theorem.\vspace{1.2mm}
	
	We now establish the uniqueness of $R_f(\alpha)$ in the interval $(0, 1)$. Note that $J_1(r)$ is a real-valued, differentiable function on $(0, 1)$ with $J_1'(r) > 0$ everywhere in $(0, 1)$. Indeed, direct calculation yields $M_\alpha(r) > 0$ and
	\[
	\frac{d}{dr}\big(M_\alpha(r)\big) = 2r+8(1-\alpha)^2\sum_{n=2}^{\infty}r^{2n-1}> 0, \quad r \in (0, 1).
	\]
	Hence, for $r \in (0, 1)$, given that $\mu$ and $\lambda$ are monotonically increasing functions, with $\mu'(s) > 0$ and $\lambda'(s) > 0$ for all $s \in [0, \infty)$, we have
	\begin{align*}
		\frac{d}{dr}\big(J_1(r)\big) &= pr^{p-1}+\frac{2(1-\alpha)r}{1-r}+ \mu'\big(M_\alpha(r)\big)\frac{d}{dr}\big(M_\alpha(r)\big) \\&\;\;+ 2\lambda'\big((M_\alpha(r))^2\big)M_\alpha(r)\frac{d}{dr}\big(M_\alpha(r)\big) > 0.
	\end{align*}
	Applying the Intermediate Value Theorem, $R_f(\alpha)$ must be the unique solution to $J_1(r) = 0$ in $(0, 1)$. As a result,
	\begin{align}\label{eq-2.10}
			R_f^p(\alpha) - &2(1-\alpha)(R_f(\alpha)+\ln(1-R_f(\alpha)))
		+\mu\big(M_\alpha(R_f(\alpha))\big)
		+\lambda\big(M_\alpha(R_f(\alpha))^2\big)
		\\&\quad\nonumber=1 +2(1-\alpha)(\ln 2 - 1)
	\end{align}
	\vspace{1.2mm}
	
To establish the sharpness of the radius $R_f(\alpha)$, we consider the extremal function $f = f_\alpha$ given in \eqref{eq-1.6}. It is easy to see that $f_\alpha \in \mathcal{P}_{\mathcal{H}}^0(\alpha)$. For $f = f_\alpha$, it can be shown that
\begin{align}\label{eq-2.11}
	d(f(0), \partial f(\mathbb{D})) = 1 + 2(1 - \alpha)(\ln 2 - 1).
\end{align}
Setting $f = f_\alpha$ and taking $|z| = r > R_f(\alpha)$, relations \eqref{eq-2.10} and \eqref{eq-2.11} lead to
\begin{align*}
	\mathcal{A}_{f_\alpha, \mu, \lambda}(R_f(\alpha)) &> R_f^p(\alpha) + \sum_{n=2}^{\infty} (|a_n| + |b_n|) R_f^n(\alpha) + \mu\left(\frac{S_{R_f(\alpha)}}{\pi}\right) + \lambda\left(\left(\frac{S_{R_f(\alpha)}}{\pi}\right)^2\right) \\
	&= R_f^p(\alpha) + \sum_{n=2}^{\infty} \frac{2(1-\alpha)R_f^n(\alpha)}{n} + \mu\big(M_\alpha(R_f(\alpha))\big) + \lambda\big((M_\alpha(R_f(\alpha)))^2\big) \\
	&= 1 + \sum_{n=2}^{\infty} \frac{2(1-\alpha)(-1)^{n-1}}{n} \\&=1 + 2(1 - \alpha)(\ln 2 - 1)\\
	&= d\big(f_\alpha(0), \partial f_\alpha(\mathbb{D})\big),
\end{align*}
which confirms that $R_f(\alpha)$ cannot be improved. This finishes the proof.
\end{proof}
\begin{proof}[\bf Proof of the Theorem \ref{Th-2.2}]
	Applying Lemma A for $|z| = r$, it follows from \eqref{eq-2.4} that
	\begin{align}\label{eq-2.12}
		\mathcal{I}_{f, p, \mu}(r) &\le \left( r + \sum_{n=2}^{\infty} (|a_n| + |b_n|) r^n \right)^p + \sum_{n=2}^{\infty} (|a_n| + |b_n|) r^n + \mu\big(M_\alpha(r)\big)  \\
		&\le \left( r + \sum_{n=2}^{\infty} \frac{2(1-\alpha)r^n}{n} \right)^p + \sum_{n=2}^{\infty} \frac{2(1-\alpha)r^n}{n} + \mu\big(M_\alpha(r)\big) \notag \\
		&\le 1 + \sum_{n=2}^{\infty} \frac{2(1-\alpha)(-1)^{n-1}}{n} \notag\\&=1 + 2(1 - \alpha)(\ln 2 - 1)\notag
	\end{align}
	provided $r \le R_f^*(\alpha)$, where $R_f^*(\alpha)$ represents the smallest root of $J_2(r) = 0$ in the interval $(0, 1)$, and $J_2 : [0, 1] \to \mathbb{R}$ is as defined in the theorem hypothesis. Following the similar argument that being used in Theorem \ref{Th-2.2}, one can readily verify that $R_f^*(\alpha)$ is indeed the unique root of $J_2(r) = 0$ in $(0, 1)$. Hence,
	\begin{align}\label{eq-2.13}
		&\left( R_f^*(\alpha) + \sum_{n=2}^{\infty} \frac{2(1-\alpha)(R_f^*(\alpha))^n}{n} \right)^p + \sum_{n=2}^{\infty} \frac{2(1-\alpha)(R_f^*(\alpha))^n}{n} + \mu\big(M_\alpha(R_f^*(\alpha))\big)  \\
		&= 1 + 2(1 - \alpha)(\ln 2 - 1).\nonumber
	\end{align}
	Thus, the inequality is holds for $r \le R_f^*(\alpha)$.\vspace{1.2mm}
	
	To show the sharpness of the radius $R_f^*(\alpha)$, we test the extremal function $f = f_\alpha$ defined in \eqref{eq-1.6}. From relations \eqref{eq-2.11} and \eqref{eq-2.13}, choosing $f = f_\alpha$ and $|z| = r > R_f^*(\alpha)$ gives
	\begin{align*}
		&\mathcal{I}_{f_\alpha, p, \mu}(R_f^*(\alpha)) \\&> |f_\alpha(R_f^*(\alpha))|^p + \sum_{n=2}^{\infty} (|a_n| + |b_n|) (R_f^*(\alpha))^n + \mu\left( \frac{S_{R_f^*(\alpha)}}{\pi} \right) \\
		&= \left( R_f^*(\alpha) + \sum_{n=2}^{\infty} \frac{2(1-\alpha)(R_f^*(\alpha))^n}{n} \right)^p + \sum_{n=2}^{\infty} \frac{2(1-\alpha)(R_f^*(\alpha))^n}{n} + \mu\big(M_\alpha(R_f^*(\alpha))\big) \\
		&= 1 + \sum_{n=2}^{\infty} \frac{2(1-\alpha)(-1)^{n-1}}{n} \\
		&=1 + 2(1 - \alpha)(\ln 2 - 1)\\&= d\big(f_\alpha(0), \partial f_\alpha(\mathbb{D})\big).
	\end{align*}
	This confirms that $R_f^*(\alpha)$ cannot be improved, completing the proof.
\end{proof}

\noindent{\bf Acknowledgment:} The research of the first author is supported by SERB File No. SUR/2022/002244, Govt. of India and the second author is supported by UGC-JRF, File No. 211610135410, Govt. of India.

\vspace{1.2mm}

\noindent\textbf{Compliance of Ethical Standards:}\\

\noindent\textbf{Conflict of interest.} The authors declare that they have no conflicts of interest regarding the publication of this paper.\vspace{1.5mm}

\noindent\textbf{Funds.} No funds.\vspace{1.5mm}

\noindent\textbf{Data availability statement.}  Data sharing not applicable to this article as no datasets were generated or analysed during the current study.

\end{document}